\DeclareMathOperator*{\argmin}{arg\,min}
\newtheorem{problem}{Problem}
\newtheorem{theorem}{Theorem}
\newtheorem*{remark}{Remark}
\title{\LARGE \bf
Learning the Delay Using Neural Delay Differential Equations} 
\author{Maria Oprea$^{1}$, Mark Walth$^{2}$, Robert Stephany$^{1}$, Gabriella Torres Nothaft$^{2}$,  Arnaldo Rodriguez-Gonzalez$^{3}$,\\ and William Clark$^{2}$
\thanks{*This work was funded by NSF grant DMS-1645643.}
\thanks{$^{1}$M. Oprea and R. Stephany are with the Center for Applied Mathematics, Cornell University, Ithaca NY, {\tt\small \{mao237, rrs254\}@cornell.edu}}
\thanks{$^{2}$M. Walth, G. Torres Nothaft, and W. Clark are with the Department of Mathematics, Cornell University, Ithaca NY, {\tt\small \{msw283,gt285,wac76\}@cornell.edu}}
\thanks{$^{3}$A. Rodriguez-Gonzalez is with the Sibley School of Mechanical and Aerospace Engineering, Cornell University, Ithaca NY, {\tt\small ajr295@cornell.edu}}
}
\begin{document}

\maketitle
\thispagestyle{empty}
\pagestyle{empty}

\begin{abstract}

The intersection of machine learning and dynamical systems has generated considerable interest recently.  Neural Ordinary Differential Equations (NODEs) represent a rich overlap between these fields. In this paper, we develop a continuous time neural network approach based on Delay Differential Equations (DDEs). Our model uses the adjoint sensitivity method to learn the model parameters and delay directly from data. Our approach is inspired by that of NODEs and extends earlier neural DDE models, which have assumed that the value of the delay is known a priori. We perform a sensitivity analysis on our proposed approach and demonstrate its ability to learn DDE parameters from benchmark systems. We conclude our discussion with potential future directions and applications.

\end{abstract}

\section{INTRODUCTION}

Neural ordinary differential equations (NODEs) have proven to be an efficient framework for solving various problems in machine learning \cite{https://doi.org/10.48550/arxiv.1806.07366,dupont_augmented_2019}. 
NODEs were inspired by the observation that a traditional residual neural network can be viewed as a discretized solution to an ordinary differential equation (ODE), where each ``layer" corresponds to a single time step in the discretization, and the  ``depth" corresponds to the length of integration time. 
NODEs begin with this implied ODE model but treat the solution to this ODE as the hidden state.
In this framework, a forward pass through the network is computed by calling an ODE solver, and backpropagation works by integrating an appropriate adjoint equation backward in time. 
NODEs are a time and memory-efficient model for various regression and classification tasks \cite{https://doi.org/10.48550/arxiv.1806.07366,dupont_augmented_2019}.

In this work, we develop a novel continuous-time neural network approach based on delay differential equations (DDEs). 
The mathematical structure of delay differential equations differs substantially from that of ODEs \cite{Hale1993IntroductionTF}. 
As a result, phenomena modeled by DDEs are often poorly represented by ODE models. 
There has been recent progress in developing Neural DDEs (NDDEs), the DDE counterpart of NODEs \cite{anumasa2021delay}, \cite{Zhu_ICLR_2021}, \cite{zhu_neural_2022}.
For example, \cite{Zhu_ICLR_2021} and \cite{zhu_neural_2022} show that NDDEs can learn models from time series data which cannot be learned by standard NODEs. 

\begin{figure}[H]
    \centering
    \includegraphics[width = 0.5\textwidth]{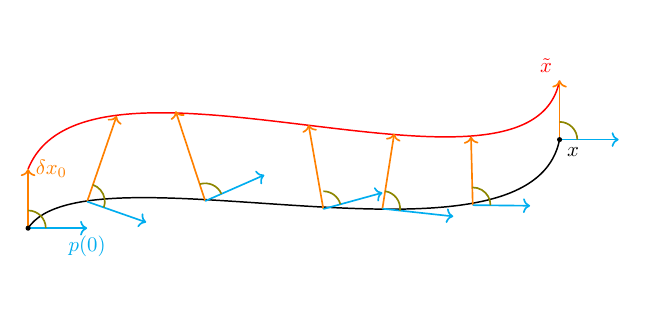}
    \caption{Relation of adjoint to the variational equation. The black curve is the reference trajectory; red curve is a nearby perturbed trajectory. Orange represents the variation between the red curve and black curve. The blue arrows are the adjoint.  Notice that the angle between the adjoint and the variation is constant.}
    \label{fig:adjoint_perturbation}
\end{figure}

\noindent They also show that NDDEs can perform classification tasks that standard NODEs cannot.

Our work extends \cite{anumasa2021delay}, \cite{Zhu_ICLR_2021} and \cite{zhu_neural_2022} in several ways. 
The authors of \cite{Zhu_ICLR_2021} assume the magnitude delay is known a priori, and their model cannot learn the delay as a parameter. 
By contrast, by performing a sensitivity analysis of our NDDE model, we derive an adjoint equation that allows our model to learn the delay from data. 
This difference makes our approach more applicable as a system identification tool, as the exact value of the delay is often unknown in practice. 

There are myriad examples of dynamic processes whose evolution depends not only on the system's current state but also on the state at a previous time;
such systems can be modeled by delay differential equations. 
Examples of such systems are wide-ranging: in computer networks, delays arise due to the transmission time of information packets \cite{yu_lmi_2004}; in gene expression dynamics, a delay occurs due to the time taken for the messenger RNA to copy genetic code and transport macromolecules from the nucleus to the cytoplasm \cite{busenberg_interaction_1985}; in population dynamics, delay enters due to the time taken for a species to reach reproductive maturity \cite{kuang_delay_1993}. 
Further, in engineering applications, delays can be used as a control mechanism: such applications have arisen in the theory of machine tool vibrations \cite{kalmar-nagy_subcritical_2001}, automotive powertrains \cite{jankovic_developments_2009}, gantry crane oscillations \cite{nayfeh_delayed-position_2005}, and many others.

The main contribution of this paper can be summarized as follows: we propose a novel algorithm for learning the delay and parameters of an unknown underlying DDE based on observations of trajectory data. 
Our approach is based on the adjoint sensitivity method (see Fig. \ref{fig:adjoint}), which we describe in detail in Sec. \ref{sec:adjoint}. 
We implement the algorithm summarized in Sec. \ref{sec:algo} using \texttt{PyTorch}, and provide the code used in this work at the end of that section.
We then establish the efficacy of our approach on demonstrative examples in Sec. \ref{sec:example}.

\begin{figure*}[h!]
    \centering
    \includegraphics[width = \textwidth]{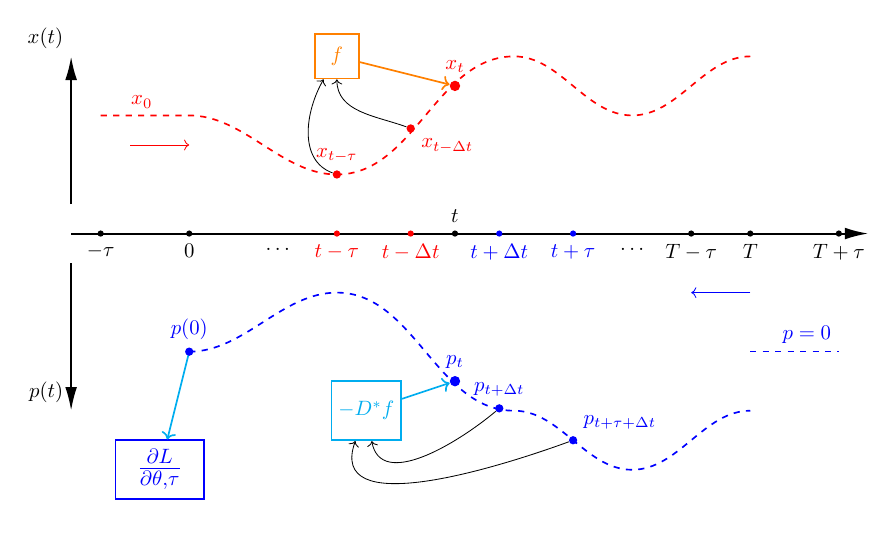}
    \caption{Schematic drawing of the adjoint method for computing gradients. The dashed red curve on the upper half of the figure represents the computed solution to the original delay differential equation $\dot{x}(t)=f(x(t),x(t-\tau))$. We use constant initial condition $x(t)=x_0$ for $t\in[-\tau,0]$. The dashed blue curve on the bottom half represents the computed solution to the adjoint problem, where the adjoint operator $-D^*f$ takes present and past values of the computed adjoint solution $p$ to calculate future values of $p$.}
    \label{fig:adjoint}
\end{figure*}

\section{Problem statement}

A DDE with a single delay, $\tau > 0$, can be written in the form:
\begin{equation}\label{DDE}
    \dot{x}(t) = f(x(t), x(t - \tau))
\end{equation}
where $f$ is a functional that gives tangents of the curve $x(t) \in Q$.
Here, $Q$ is a Euclidean space.
The DDE's initial conditions are a function $x_0:[-\tau, 0]\rightarrow Q$. 
In general, $f:Q\times Q \rightarrow TQ$ can be any function that takes in two elements ($x$ and $y$) from the manifold $Q$ and gives an element of the tangent space of the first entry: $f(x, y) \in T_xQ$. 
Throughout this paper, however, we will assume that $f$ belongs to a known parameterized family, $\{ f_\theta : Q \times Q \to TQ \mid \theta \in \mathbb{R}^d \}$.
The goal is to learn the parameter $\theta$ corresponding to $f$. 

Let $L:Q\times \mathbb{R} \rightarrow \mathbb{R}^+$ be a loss function of the form:
$$L(x(\cdot)) = \int_0^T \ell(x(t)) \, dt + g(x(T))$$ where $\ell : Q \rightarrow \mathbb{R}$ is the running cost and $g:Q \rightarrow\mathbb{R}$ is the terminal cost. 

We are interested in the following scenario: We have a data set consisting of $N_{data}$ measurements, $\{ X_j \}_{j = 0}^{N_{data} - 1}$, taken from the flow of an unknown delay differential equation.
We assume this DDE has a constant initial condition given by the first measurement point $X_0$, i.e. that $x(t)=X_0$ for $t\in[-\tau,0]$.

The main goal of this paper is to solve the following minimization problem:
\begin{problem}
Find 
\begin{alignat*}{3}
    \argmin_{\tau > 0,\ \theta \in \mathbb{R}^d} \quad& L(x(\cdot&&)) && \\
    \ subject \ to \quad& \dot{x}(t)\ &&= f_\theta \big( x(t), x(t - \tau) \big),  &&\quad\, 0 < t \leq T\\
    & x(t) &&= X_0, &&-\tau \leq t\leq 0
\end{alignat*}
\end{problem}

\section{Proposed algorithm for finding the delay}\label{sec:algo}

The overarching idea is to find the minimizer of the loss function using the gradient descent method. 
To accomplish this, we use the adjoint sensitivity method to obtain the gradient of the loss with respect to the model parameters, $\theta$, and the delay $\tau$.  

Throughout this paper, we will assume our physical space is $\mathbb{R}^n$, with the usual metric and inner product. 
The cotangent space is also isomorphic to $\mathbb{R}^n$, which means that we can think of the adjoint as a row vector in $\mathbb{R}^n$. 

\subsection{Adjoint sensitivity method}\label{sec:adjoint}
To derive our result, we use an analysis that is similar to the one for ODEs in \cite{where_adjoint_comes_from}. 
Consider a general DDE as in \eqref{DDE}. 
In the following, we write: 
$$y(t) = \begin{cases}  x(t - \tau) ,  \text{ if } t > \tau \\
                        x_0(t), \quad\ \text{ if } t \leq 0\end{cases}$$
and treat $y$ as an independent variable.

The goal is to obtain a formula for the sensitivity of the trajectory at a given time, $t$, with respect to variations in the initial conditions. 
The following idea comes from the calculus of variations and is at the core of mechanics and continuous optimization \cite{abraham_marsden}.
Let $V$ be the vector space of all small perturbations, $\delta x_0$, to the initial condition.
These perturbations will propagate forward through the functional, $f$, engendering a perturbed trajectory $\Tilde{x}(t)=  x(t) + \delta x(t)$. 
We consider a family of perturbations $\{ \delta_\varepsilon x_0 \}$, with corresponding perturbed trajectories $\{ \delta_\varepsilon x(t) \}$ at time $t$, for which $\lim_{\varepsilon \to 0} \delta_\varepsilon x(t) = 0$.
The evolution of the perturbation is described by the equation $\dot{\delta_\varepsilon x} = \dot{\Tilde{x}} - f(x, y)$. 
To compute the sensitivity, one would need to solve a nonlinear differential equation for $\Tilde{x}$ from $0$ to $t$ and then observe $\delta_{\varepsilon} x(t) / \varepsilon$ as $\varepsilon \to 0$. 
The idea of the adjoint method is that instead of working in $V$, we consider its dual space $V^*$. 
In particular, we are looking for the adjoint $p \in V^*$ such that  $p(\delta_\varepsilon x)$ stays constant as in Fig. \ref{fig:adjoint_perturbation}.
The advantage of using the dual is that the adjoint satisfies a linear equation. 
Moreover, we can obtain a formula for the sensitivity directly from the values of $p$, without needing to compute the perturbed trajectories and take the limit as $\varepsilon \to 0$.

In $\mathbb{R}^n$, the perturbations are column vectors, while the adjoints are row vectors. 
We will denote the pairing between an element and its dual by $\langle \cdot, \cdot\rangle$.

We can obtain the formula for the time evolution of $p$ by defining the action for the DDE $\dot{x}(t) = f(x(t),\ x(t - \tau)) = f(x, y)$ with initial condition $x_0:[-\tau, 0]\rightarrow \mathbb{R}$ as:
$$ S = \int_{0}^T \langle p(t),\dot{x}(t)\rangle - H(x(t), y(t), p(t))\,dt $$
where $H(x, y, p) = \langle p, f(x, y)\rangle$ is the Hamiltonian. 

The true path $(x(t), p(t))$ is a critical value of this action. 
With this knowledge, we can find the equation of motion for the adjoint $p$ using the calculus of variations. 
We vary $x$ and $p$, keeping the initial $x_0$ and the final $x(T)$ fixed. We consider perturbations $\delta_\varepsilon x = \varepsilon\delta x$ for some fixed $\delta x$. 
\begin{multline*}
    \delta S =\lim_{\varepsilon \to 0 } \frac{1}{\varepsilon}(S(p + \varepsilon\delta p, x + \varepsilon\delta x) - S(p, x)) = \\ \langle p(T),\ \delta x(T) \rangle - \langle p(0),\ \delta x_0 \rangle  + \int_{0}^{T} \langle \delta p,\ \dot{x} - f(x, y) \rangle - \\ \langle \dot p(t) + Df^*_x p(t) +  Df_y^* p(t + \tau)\boldsymbol{1}_{t < T - \tau},\ \delta x(t) \rangle \, dt
\end{multline*}

The boundary terms vanish since the variations have fixed endpoints. 
Setting $\delta p \to 0$ recovers the initial DDE while letting $\delta x \to 0$ recovers the adjoint equation:
\begin{align}\label{eq:DDE_adjoint}
    \dot{p}(t) &= -Df_x^*\big( p(t) \big) - Df^*_y \big( p(t + \tau) \big)\boldsymbol{1}_{t < T - \tau}(t), 
\end{align}
where $\boldsymbol{1}_{t<T-\tau}$ is the indicator on the set $t<T-\tau$. Here, $Df_y$ is the partial derivative of $f$ with respect to its first variable, $y$. 
In $\mathbb{R}^n$, $Df_y$ is the $n\times n$ matrix with entries: $(Df_y)_{ij} = \frac{\partial f^i}{\partial x^j}$. 
The star denotes the adjoint with respect to the inner product. 
That is, 
$$\langle p, Df_x \dot x  \rangle = \langle Df_x^*(p),  \dot x  \rangle.$$ 
In $\mathbb{R}^n$ with the usual inner product, the adjoint is the matrix transpose $Df_x^* = Df_x^T$. 
Similarly, $Df_y$ is the partial derivative of $f$ with respect to the second component, which is the $n \times n$ matrix $(Df_y)_{ij} = \frac{\partial f^i}{\partial y_j}$. 

Throughout this paper, we will use superscripts to index the components of the vectors, $x$, and subscripts for the components of the adjoint covectors, $p$. 

Defining the adjoint in this way is particularly useful because it allows us to efficiently compute the sensitivity of the trajectory with respect to parameters.
The following theorem summarizes this fact:

\begin{theorem}
Let $x(t)$ denote the solution of the DDE \eqref{DDE} with constant initial function $x_0(t) = x_0 \in \mathbb{R}^n$. For each $i \in \{1, \dots, n\}$ suppose the adjoint $p_i$ satisfies the delay differential equations given in \eqref{eq:DDE_adjoint}, with terminal condition: $p_i(T) = e_i$ (where $e_i$ denotes the $i$th standard basis vector) and $p_i(t) = 0 , t > T$. Then

\begin{align*}
    \frac{\partial x^i(T)}{\partial x_0} &= p_i(0), \\
    \frac{\partial x^i(T)}{\partial \theta} & = \int_0^T p_i(t) \frac{\partial f}{\partial \theta} \big( x(t), y(t) \big) dt, \\
    \frac{\partial x^i(T)}{\partial \tau} &= - \int_0^{T -\tau} p_i(t + \tau)\frac{\partial f}{\partial y} \big( x(t + \tau), x(t) \big) \dot x(t)dt.
\end{align*}
\end{theorem}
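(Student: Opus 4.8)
The plan is to use the Lagrangian (augmented-functional) form of the adjoint method, exactly parallel to the variational derivation of the action $S$ sketched above, but now carrying the parameter dependence explicitly. For a fixed index $i$ and any absolutely continuous covector path $p:[0,T]\to\mathbb{R}^n$, I would define
$$\Phi = x^i(T) - \int_0^T \langle p(t),\, \dot x(t) - f_\theta(x(t), x(t-\tau))\rangle\,dt.$$
Because $x$ solves the DDE, the integrand is identically zero, so $\Phi$ and $x^i(T)$ agree as functions of $(x_0,\theta,\tau)$; consequently their derivatives with respect to any of these parameters coincide. The whole point is that $p$ remains free, and I will later fix it to solve the adjoint equation \eqref{eq:DDE_adjoint} with $p(T)=e_i$ and $p\equiv 0$ on $(T,\infty)$, a choice engineered to annihilate every term I cannot otherwise control.

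Next I would differentiate $\Phi$ with respect to the parameter in question, writing $v(t)$ for the induced variation $\partial x(t)/\partial(\cdot)$ of the trajectory. Differentiating through the integrand produces $\langle p,\ \dot v - Df_x\,v(t) - Df_y\,v(t-\tau) - \partial_\theta f\rangle$, the last term being present only for the $\theta$-derivative. Two manipulations then expose the adjoint operator: an integration by parts on $\langle p,\dot v\rangle$, which creates the boundary pair $\langle p(T),v(T)\rangle - \langle p(0),v(0)\rangle$ and the interior term $\int\langle\dot p,v\rangle$; and a change of variables $s=t-\tau$ in the delay term $\int_0^T\langle p(t), Df_y\,v(t-\tau)\rangle\,dt$, converting it to $\int_0^{T-\tau}\langle Df_y^*\,p(t+\tau),\,v(t)\rangle\,dt$ — here the upper limit $T-\tau$ is precisely the source of the indicator $\mathbf{1}_{t<T-\tau}$, and the convention $p\equiv0$ past $T$ keeps the shift well defined. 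Collecting the coefficient of $v(t)$ on $(0,T)$ gives exactly $\dot p + Df_x^*p + Df_y^*p(t+\tau)\mathbf{1}_{t<T-\tau}$, which vanishes once $p$ solves \eqref{eq:DDE_adjoint}. What survives are boundary and history-interval contributions, from which I read off the first two identities: the $\theta$-formula is the leftover $\int_0^T\langle p,\partial_\theta f\rangle\,dt$, and the initial-condition formula is the surviving $t=0$ boundary term $\langle p(0),v(0)\rangle=p_i(0)$, the direct DDE analogue of the ODE result.

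The delay sensitivity is the genuinely new and most delicate case, because $\tau$ enters twice: once through the trajectory's parameter dependence and once explicitly in the lagged argument $x(t-\tau)$. Differentiating $x(t-\tau)$ by $\tau$ yields $w(t-\tau)-\dot x(t-\tau)$, contributing an extra, purely explicit term $-Df_y\,\dot x(t-\tau)$ to $\partial_\tau f$. The $w$-dependent pieces are handled exactly as above and cancel against the adjoint equation, while the explicit piece, after the same shift $s=t-\tau$ and using that $x$ is constant (hence $\dot x=0$) on the history interval $[-\tau,0]$, collapses to the claimed $-\int_0^{T-\tau} p_i(t+\tau)\,\tfrac{\partial f}{\partial y}(x(t+\tau),x(t))\,\dot x(t)\,dt$. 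Crucially, because the prescribed history $x(t)\equiv x_0$ does not depend on $\theta$ or $\tau$, the induced variation $v$ vanishes on $[-\tau,0]$ in those two cases, so the delay change-of-variables produces no spurious contribution from the history window.

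I expect the main obstacle to be the careful bookkeeping of the history interval $[-\tau,0]$ under the delay change of variables: one must verify that the shifted integral splits cleanly, that the indicator $\mathbf{1}_{t<T-\tau}$ and the terminal convention $p\equiv0$ on $(T,\infty)$ align at the right endpoint, and that the history-window terms genuinely drop out in the $\theta$ and $\tau$ cases (where the constant history is parameter-independent) while the $t=0$ boundary value furnishes $p_i(0)$ in the initial-condition case. The analytic point underpinning the entire computation is differentiability of the flow with respect to $\tau$ — that the variation $w$ and the sensitivities exist and satisfy the variational equation — which would require the standard smoothness hypotheses on $f_\theta$.
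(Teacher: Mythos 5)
Your proposal is correct and follows essentially the same route as the paper: the paper likewise integrates the variational equation against the adjoint, integrates by parts, shifts the delayed term by $\tau$, and invokes \eqref{eq:DDE_adjoint} to cancel the interior terms, leaving the identity $\langle p(T),\delta x(T)\rangle = \langle p(0),\delta x(0)\rangle + \delta\theta\int_0^T p\,\partial_\theta f\,dt - \delta\tau\int_0^{T-\tau}p(t+\tau)\,\partial_y f\,\dot x\,dt$ from which the three formulas are read off. Your augmented-functional packaging and your explicit treatment of the $-\dot x(t-\tau)$ term arising from differentiating the lag (with the history-interval contribution vanishing because $\dot x\equiv 0$ there) are just a more detailed writeup of the paper's very terse argument.
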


\begin{proof}
As in \cite{where_adjoint_comes_from}, we compute variations of $\dot{x} = f(x, y, \theta)$ and integrate them against the adjoint. 
After doing integration by parts and plugging in the formula for the adjoint equation \eqref{eq:DDE_adjoint} we obtain
\begin{multline*}
     p(T) \delta x(T) = p(0) \delta x(0) + \delta \theta \int_0^T p_i(t) \frac{\partial f}{\partial \theta} \big( x(t), y(t) \big) dt - \\ \delta \tau\int_0^{T -\tau} p_i(t + \tau)\frac{\partial f}{\partial y} \big( x(t + \tau), x(t) \big)\dot x(t)dt. 
\end{multline*}
If we set the the boundary condition $p_i(T) = e_i$ then on the left-hand side we are left with the $i$th component of $\delta x$.
On the right-hand side, the limit as $\delta \theta$, $\delta \tau $ and $\delta x(0)$ respectively go to $0$ gives us the desired formulas for the sensitivity.
\qed
\end{proof}

$~$

Using this result, we can derive expressions for the gradient of the loss with respect to the parameters. 
We distinguish two particular cases
\subsubsection{No running cost}

If $\ell = 0$, we can directly find the gradients using the chain rule:
\begin{align*} 
    \frac{\partial L}{\partial \theta} &= \sum_{i = 1}^n \frac{\partial g}{\partial x^i}\Big|_{x(T)}\int_0^T p_i(s) \frac{\partial f}{\partial \theta}\big( x(s), y( s) \big)ds,
\end{align*}
$~$
\begin{multline*}
    \frac{\partial L }{\partial \tau} = \sum_{i = 1}^n \frac{\partial g}{\partial x^i}\Big|_{x(T)}\int_0^{T-\tau} p_i(s + \tau)  \\ 
    \frac{\partial f}{\partial y}\big( x(s + \tau), y(s + \tau) \big)  \dot x (s) ds.
\end{multline*}

\begin{remark}
We can treat the case where
\begin{align}\label{eq:loss:discrete}
    L(x, \tau, \theta) &= \sum_{j = 0}^{N_{data} - 1}\|X_{j} - x(t_{j}) \|^2 \nonumber \\ &= \sum_{j = 0}^{N_{data} - 1} \sum_{i = 1}^n \left(X^i_{j} - x^i(t_{j})\right)^2,
\end{align}
in the same manner. 
In this case, the gradients with respect to $\theta$ and $\tau$ are:
\begin{align}\label{eq:loss_gradient:discrete}
    \frac{\partial L}{\partial \theta, \tau} = -2 \sum_{j = 0}^{N_{data} - 1} \sum_{i = 1}^n (X^i_j - x^i(t_j))\frac{\partial x^i(t_j)}{\partial \theta, \tau}.
\end{align}

Since the adjoint equation has the same terminal condition independent of the time, we do not need to solve a different DDE for each $j$. 
Suppose we have the adjoint trajectory for $x^i(T)$, denoted by $p_i$. 
Then the adjoint for $x^i(t_j)$ is 
$$p_i^j(t) = p_i(T - t_j + t),\ t \in [0, t_j].$$
Therefore:
\begin{align*}
    \frac{\partial x^i(t_j)}{\partial \theta} & = \int_0^{t_j} p_i(T - t_j + t) \frac{\partial f}{\partial \theta} (x(t), y(t)) dt, 
\end{align*}
\begin{multline*}
    \frac{\partial x^i(t_j)}{\partial \tau}  = - \int_0^{t_j -\tau} p_i(T - t_j + t + \tau) \\ \frac{\partial f}{\partial y}(x(t + \tau), x(t)) \dot x(t)dt.
\end{multline*}

\end{remark}
\subsubsection{Fully general case}
We define the extended action $$\Tilde{S} = \int_0^T \ell(x(t)) + \langle p(t),\dot{x}\rangle - H(x(t), y(t), p(t)) dt. $$

As before, we vary $x$ and $p$ with fixed endpoints to find the extended adjoint equation:
\begin{equation}\label{eq:extended_adjoint}
        \dot{p}(t) = -Df_q^*(p(t)) - Df^*_y(p(t + \tau))\boldsymbol{1}_{t < T - \tau}(t) + \frac{\partial l}{\partial x}.
\end{equation}

Let $p(t)$ be the solution to \eqref{eq:extended_adjoint} with terminal condition $p(T) = -\frac{\partial g}{\partial x}\Big|_{x(T)}$, $p(t) = 0 , t > T$. 
Then the gradients of the loss are:
\begin{align*}
    \frac{\partial L}{\partial \theta} &= -\int_0^T p(t) \frac{\partial f}{\partial \theta} \big(x(t), y(t)\big) dt,  \\
    \frac{\partial L}{\partial \tau} &=  \int_0^{T -\tau} p(t + \tau) \frac{\partial f}{\partial y}\big(x(t + \tau), x(t)\big)\dot x(t)dt.
\end{align*}
A schematic drawing of the procedure is found in Fig. \ref{fig:adjoint_perturbation}.

\subsection{Proposed algorithm for finding the delay}
We assume the target data $\{ X_j \}_{j = 0}^{N_{data} - 1}$ represents measurements of a single DDE trajectory. 
Since we are working with discrete measurements, we use the loss function given in \eqref{eq:loss:discrete}.
Using the formulas for the gradients given above, we train a model to learn the delay and parameters of the DDE that the data comes from.

Starting from a random initial guess for $\theta$ and $\tau$, we compute the predicted trajectory of the DDE with these parameters. 
Once we have this trajectory, we solve \eqref{eq:DDE_adjoint} backward in time to find the adjoint. 
From there, we use  \eqref{eq:loss_gradient:discrete} to compute the gradient of the loss. 
Finally, we perform a step of gradient descent using the \texttt{Adam} optimizer \cite{kingma2014adam}. 
We repeat this process for $N_{epochs}$ iterations, or until the target and predicted trajectories are sufficiently close. 
Algorithm \ref{alg:two} summarizes our approach.

We implemented Algorithm \ref{alg:two} in \texttt{PyTorch}.
For our implementation, we wrapped the forward and backward procedures in a \texttt{torch.autograd.Function} class.
In the forward pass, we use a DDE solver to calculate the predicted trajectory.
For the experiments in this paper, we used a first-order Euler step for our DDE solver.
In the backward pass, we use the same DDE solver to calculate the adjoint by solving \eqref{eq:DDE_adjoint} backward in time.
Our code then computes and returns the gradient of the loss with respect to $\tau$ and $\theta$. 
Our implementation is open source and is available at: \url{https://github.com/punkduckable/NDDE}.

\begin{algorithm}[hbt!]
\caption{Learning delay and parameters from data}\label{alg:two}
\KwData{$\{ X_j \}_{j = 0}^{N_{data} - 1}$}
\KwResult{$\theta, \tau$}
\Init{$\theta, \tau \sim U(-2, 2)$  }{ }
\For {$i \gets 1, \ldots, N_{epochs}$}{
Solve $\ x'(t) = f(x(t), x(t - \tau))$, $\ x|_{[-\tau, 0]} = X_0$\;
Compute $L(x(t), \theta, \tau)$\;
Check for convergence\;
Solve $\ \dot{p}(t) = -p(t)\frac{\partial f}{\partial q} - p(t + \tau) \, \frac{\partial f}{\partial y}\, \boldsymbol{1}_{t < T - \tau}(t) $, $p|_{[T, T + \tau]} = 0$, $p(T) = 1$\;
Compute $\frac{\partial L}{\partial \theta}$ and $\frac{\partial L}{\partial \tau}$\;
Update $\theta$ and $\tau$
 }
\end{algorithm}

\section{Examples}\label{sec:example}

In this section, we test the algorithm outlined in section \ref{sec:algo} on two simple examples. 
Specifically, we learn the parameters and the delay in a logistic delay equation model, and an delay exponential delay model. 
For these experiments, we use the loss given by \eqref{eq:loss:discrete} and update $\theta$ and $\tau$ using equations \eqref{eq:loss_gradient:discrete}.
To generate the training sets, we use $T = 10$ then numerically solve each DDE using the forward Euler method with a step size of $dt = 0.1$. 
We use the resulting discretized solution (with $100$ data points) the target values, $\{ X_j \}_{j = 1}^{N_{Data} - 1} \}$.
Further, in both experiments, we use the \texttt{Adam} optimizer with a learning rate of $0.1$.

$~$

\noindent\textbf{Logistic Delay Equation:} 
First, we study the logistic delay equation. 
This equation was first proposed to understand oscillatory phenomena in ecology and is used to model the dynamics of a single population growing toward a saturation level with a constant reproduction rate \cite{delay_logistic}.

For this experiment the true DDE is:
$$\dot x(t) = x(t)\left(1 - x(t - \tau)\right),$$ 
with constant initial condition $x_0 = 2$. 
We use our algorithm on this equation.
For this experiment, we use the model: 
$$f(x, y, \theta) = \theta_0 x(1 - \theta_1 y).$$ 

We initialize our model with $\theta_0 = \theta_1 = 1.75$ and $\tau = 1.75$.
We then run our algorithm until the loss reaches a threshold of $0.001$. 
Table \ref{tab:Logistic} reports the discovered values of $\theta_0$, $\theta_1$, and $\tau$. 
Thus, our implementation identifies the correct parameters and $\tau$ values.

\begin{table}[]
    \centering
    \caption{results for the delay logistic equation with $N_{epochs}=144$.}
    \rowcolors{2}{white}{olive!20!green!15}

    \begin{tabulary}{\linewidth}{C C C}
        \toprule[0.3ex]
        &  True parameters &  Discovered parameters \\
        \midrule[0.1ex]
         $\theta_0$ & $1$ & $1.00325$ \\
         $\theta_1$ & $1$ &  $1.00232$ \\ 
         $\tau $ & $1$ & $0.99347$ \\
         \bottomrule[0.3ex]    \end{tabulary}
        
    \label{tab:Logistic}
\end{table}

Fig. \ref{fig:Loss_Logistic_3D} and Fig. \ref{fig:Loss_Logistic_2D} show that the loss function has a sink around the true parameters and $\tau$ values.
Further, Fig. \ref{fig:Loss_Logistic_2D} shows the path 
our algorithm takes as it converges to the true values.

\begin{figure}
    \centering
    \includegraphics[width = 0.48\textwidth]{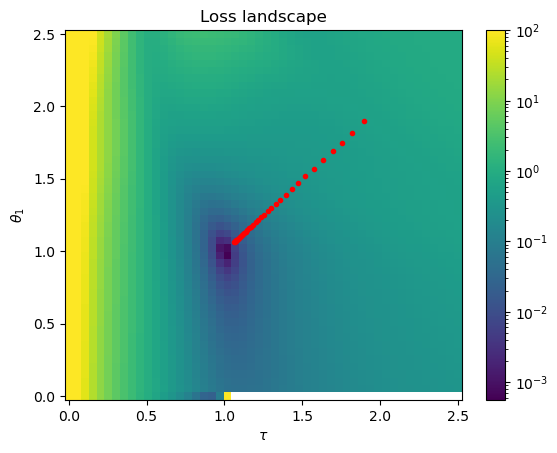}
    \caption{Parameter exploration of the loss landscape for various values of $\theta_1$ and $\tau$; darker colors indicate smaller values of the loss function. Red dots represent the learned parameters after each iteration of Algorithm \ref{alg:two}.}
    \label{fig:Loss_Logistic_3D}
\end{figure}

\begin{figure}
    \centering
    \includegraphics[width = 0.48\textwidth]{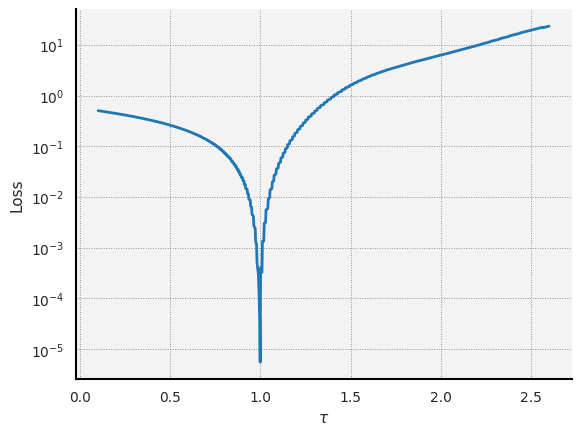}
    \caption{A semilogarithmic plot of the loss function in the Logistic Delay equation experiment for various values of $\tau $ and fixed $\theta_{1, 2} = 1$.}
    \label{fig:Loss_Logistic_2D}
\end{figure}

Fig. \ref{fig:Traj_Logistic} shows that the target and final predicted trajectories are closely aligned.

\begin{figure}
    \centering
    \includegraphics[width = 0.48\textwidth]{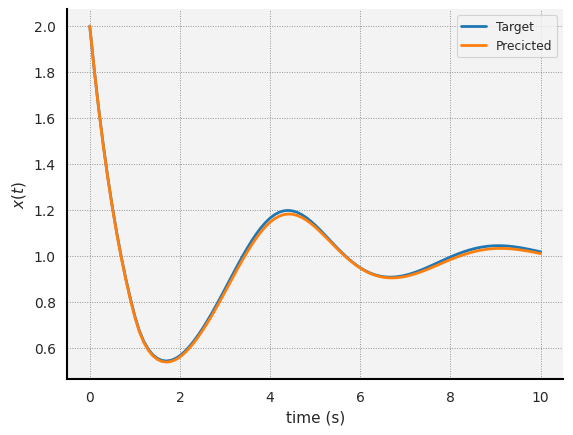}
    \caption{Trajectories of the final predicted (orange) vs. true (blue) trajectories for the delay logistic equation.}
    \label{fig:Traj_Logistic}
\end{figure}

\begin{table}[]
    \centering
    \caption{results for the delay exponential decay equation with $N_{epochs}=505$.}
    \rowcolors{2}{white}{olive!20!green!15}
        
    \begin{tabulary}{\linewidth}{C C C}
        \toprule[.3ex]
         & True parameters &  Discovered parameters \\
        \midrule[.1ex]
        $\theta_0$ & $-2$ & $-2.02428$ \\
        $\theta_1$ &  $-2$ &  $-2.01691$ \\ 
        $\tau $ & $1$ & $0.99085$ \\
        \bottomrule[.3ex]
    \end{tabulary}

    \label{tab:Exponential}
\end{table}

$~$

\noindent\textbf{Delay Exponential Decay Equation: } 
Second, we consider the delay exponential decay equation. 
This equation arises in the study of cell growth, specifically in the case when the cells need reach maturity before reproducing \cite{thesis_cell_growth}. 

For this experiment, the true DDE is: 
$$\dot x(t) = -2x(t) - 2x(t - 1),$$ 
with the constant initial condition $x_0 = -1$. 
For this experiment, we use the model:
$$f(x, y, \theta) = \theta_0 x + \theta_1 y.$$ 

We initialize our model with $\theta_0 = \theta_1 = -3.0$ and $\tau = 2.0$.
We then run our algorithm until the loss reaches a threshold of $0.001$. 
Table \ref{tab:Exponential} reports the discovered values of $\theta_0$, $\theta_1$, and $\tau$. 
Fig. \ref{fig:Traj_Exponential} shows the true and learned trajectories.

 \begin{figure}
    \centering
    \includegraphics[width=0.48\textwidth]{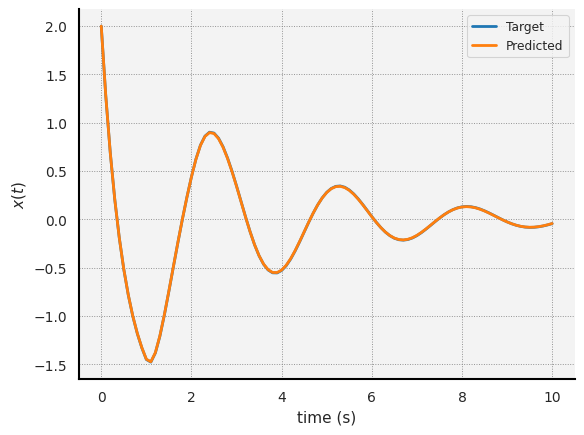}
    \caption{Trajectories of the final predicted (orange) vs. target (blue) trajectories for the delay exponential decay equation.}
    \label{fig:Traj_Exponential}
\end{figure}

\section{CONCLUSIONS}

In this paper, we expanded the neural delay differential equation framework by presenting an algorithm to learn the time delay directly from data. 
In practice, the delay is rarely known a priori.
Therefore, our advance is crucial for studying real-world systems governed by delay differential equations. 

We defined the adjoint as the critical path of the action and deduced that it must satisfy a linear DDE. 
We then showed that solving this DDE and integrating the adjoint trajectory against the partials of the functional $f$ allows us to compute the gradient of the loss with respect to the delay and the parameters.
We demonstrated the efficacy of our approach by applying our algorithm to the delay logistic and the delay exponential decay.

There are many potential future directions of research to extend our results.
First, from a theoretical standpoint, we would like to extend our analysis of the adjoint sensitivity for DDEs whose state space is a nonlinear manifold. 
Second, from a numerical perspective, we would like to make our algorithm robust enough to tackle real-world data. 
To begin, we need to make sure that our algorithm is robust with respect to noise and small fluctuations in the coefficients. 
For this purpose, using a naive Euler step to find the DDE solutions is likely insufficient.
Even for well-behaved DDEs, such as the examples we considered, the Euler step has stability issues if the parameters get too large or too small. 
Further research into the robustness of DDE solvers and the interplay between the solver and the adjoint is needed.
Additionally, to demonstrate the practical applicability of our approach, we would like to demonstrate that we can learn a good DDE model from sparse or noisy data, since applications often do not allow for collecting data quite as cleanly as we have used in our experiments.

Lastly, in the limit as the delay goes to zero, we observe that the trajectory values blow up in finite time.
Moreover, DDEs with small delays are challenging to tackle numerically, as most Runge-Kutta methods are stable only if the step size is smaller than the delay \cite{INTHOUT1996237}. 
Algorithms to bypass this limitation exist but are often computationally expensive \cite{SHAMPINE2001441}.
Therefore, we would like to come up with an analytical expression for the adjoint equations in the limit as $\tau \to 0$ and use it to build an algorithm that can distinguish between a true DDE with positive $\tau$ and an ODE with $\tau = 0$.

\addtolength{\textheight}{-12cm}   





\section*{ACKNOWLEDGMENT}
We would like to thank Maximilian Ruth for insightful and constructive conversations on this topic.



\bibliographystyle{plain}
\bibliography{bibliography.bib}

\end{document}